\newcommand{\BE}{{\mathbb{E}}}
\newcommand{\cmin}{c_{min}}
\newcommand{\subgraph}{H_0}
\newcommand{\subgraphs}{\mathcal{H}_0}
\newcommand{\V}{\mathcal{V}}
\newcommand{\W}{\mathcal{W}}
\newcommand{\dtv}{d_{TV}(X,P_{\lambda})}
 \newcommand{\Po}[1]{\textrm{Po}\left(#1\right)}
\newcommand{\E}{\mathbb{E}}
\newcommand{\Pra}[1]{\mathbb{P}\left\{#1\right\}}
\newcommand{\Gp}{\mathcal{G}\left(n,m,p\right)}
\newcommand{\Gnp}{\mathcal{G}\left(n,\hat{p}\right)}
\newcommand{\I}{\mathcal{I}}
\newcommand{\J}{\mathcal{J}}
\newcommand{\minS}{\omega}
\newcommand{\eq}{\begin{equation}}
\newcommand{\en}{\end{equation}}
\newcommand{\CC}{{\bf C}}
\newcommand{\covers}{{\mathcal{C}}}
\newcommand{\C}{{\bf C}}
\newtheorem{tw}{Theorem}
\newtheorem{lem}{Lemma}
\newtheorem{cor}{Corollary}
\newtheorem{ex}{Example}
\theoremstyle{plain}
\theoremstyle{definition}
\newtheorem{rem}{Remark}
\newtheorem{df}{Definition}
\begin{document}

\title{Poisson approximation of counts of subgraphs \\
in random intersection graphs.}

\author{Katarzyna Rybarczyk
	\thanks{
Faculty of Mathematics and Computer Science,
Adam Mickiewicz University,
Umultowska 87,
61-614 Pozna\'n, Poland,
email: kryba@amu.edu.pl}
\thanks{
Katarzyna Rybarczyk acknowledges a support by the National Science Center (NCN) grant DEC--2011/01/B/ST1/03943}
\ and Dudley Stark\thanks{
School of Mathematical Sciences, Queen Mary, University of London, London E1 4NS, United Kingdom,
email: d.s.stark@qmul.ac.uk
}
}

\maketitle

\date{}

\pagestyle{myheadings}

\begin{abstract}
Random intersection graphs are characterized by three parameters: $n$, $m$ and
$p$, where $n$ is the number of vertices, $m$ is the number of objects, and
$p$ is the probability that a given object is associated with a given vertex.
Two vertices in a random intersection graph
are adjacent if and only if they have an associated object in
common.
When $m=\lfloor n^\alpha\rfloor$ for constant $\alpha$,
we provide a condition, called {\em strictly $\alpha$-balanced}, for the
Poisson convergence of the number of induced copies of a fixed subgraph.
\end{abstract}

\section{Introduction}
The random intersection graph $\Gp$
is a probability distribution on labelled graphs.
The set of vertices of the random intersection graph $\V$ is of size
$|\V|=n$ and a second set $\W$ of size $|\W|=m$,
 called the set of {\em objects}, is used to determine the 
adjacencies in the graph.
Each vertex $v\in\V$ is associated with a set of objects $\W_v\subseteq \W$
and two vertices $v_1,v_2\in \V$ are adjacent if and only if
$\W_{v_1}\cap \W_{v_2}\neq\emptyset$.
The randomness in the graph comes by setting 
$\Pra{w\in \W_v}=p$ independently for all $w\in \W$, $v\in \V$.
The preceding description characterizes the random intersection graph
denoted by $\Gp$. The model $\Gp$ was introduced in \cite{KSS}.

Let $\subgraph$ be a given graph on $h\ge 2$ vertices and with at least 
one edge and let $K_{\V}$ denote the complete graph on the vertex set $\V$.
Let $\subgraphs$ denote the set of subgraphs of $K_{\V}$ isomorphic to 
$\subgraph$. A copy $H\in\subgraphs$ is {\em induced} in $\Gp$ if all of its edges are edges in $\Gp$ and none of its non-edges are edges in $\Gp$. 
In this paper we find conditions on $\subgraph$, $n$, $m$ and $p$
which imply that the number of induced copies of $\subgraph$ in $\Gp$ 
 has an approximately Poisson distribution.

Poisson approximation for the number
of induced copies of subgraphs has already been studied
in detail for the Erd\H os-R\'enyi model of random graphs $\Gnp$,
in which
edges between $n$ vertices
appear independently and with identical probability
$\hat{p}$;
see Chapter~6 of \cite{JLR}.
Let $e=|E(\subgraph)|$

We denote the number of induced copies of $\subgraph$ in $\Gp$
by $X=X(\subgraph)$. 
In order to facilitate our Poisson approximation of the distribution of $X$, 
we will express $X$ as a sum of indicator
random variables.
Given an integer $N$, define the set $[N]$ to be
$[N]=\{1,\ldots,N\}$
and define ${{\rm aut}(\subgraph)}$ 
to be the set of automorphisms of $\subgraph$.
The number of subgraphs of $K_\V$ isomorphic to $\subgraph$ is
\eq\nonumber
N_n:=|\subgraphs|=\binom{n}{h}\frac{h!}{|{\rm aut}(\subgraph)|}
\en
and we may index the subgraphs in $\subgraphs$ by
$$
\subgraphs=\{H_i:i\in [N_n]\}.
$$
We decompose $X$ as
\eq\label{decomp}
X=\sum_{i\in [N_n]} X_i,
\en
where $X_i$ is the indicator random variable of the event 
$\{H_i {\rm \ is\ induced \ in \ }\Gp\}$.
The intention is that the $X_i$'s should be approximately independent
and therefore $X$ should approach a Poisson distribution as $n\to\infty$
for appropriate choices of $m$ and $p$.

The total variation distance between a random variable taking nonnegative
integer values and a random variable $P_{\lambda}$ with the Poisson
distribution with parameter $\lambda$ is defined to be
$$
\dtv={\frac 1 2}\sum_{k=0}^\infty
\left|\Pra{X=k}-e^{-\lambda}\lambda^k/k!\right|.
$$
As was done in \cite{KSS,RS}, we parametrise $m=m(n)$ by
\eq\label{alphadef}
m=\lfloor n^\alpha\rfloor
\en
for some constant $\alpha>0$.
Our method of proof will be to apply Stein's method to show that
$d_{TV}(X,P_{\lambda})\to 0$ as $n\to\infty$ under suitable conditions.

Poisson approximation for the number
of induced copies of subgraphs has already been studied
in detail for the Erd\H os-R\'enyi model of random graphs,
in which
edges appear independently and with identical probability;
$\hat{p}$;
see Chapter~6 of \cite{JLR}.
Let $\subgraph$ be a graph with $e$ edges and $h$ vertices.
Given $S\subseteq V(\subgraph)$, we define $E(S)$ to be the
set of edges of $\subgraph$ having both vertices in $S$.
A graph $\subgraph$ is called {\em strictly balanced} if
\eq\label{cond}
\max_{\emptyset\varsubsetneq S\varsubsetneq V(\subgraph)}
\frac{|E(S)|}{|S|}
<\frac{e}{h}.
\en
Let $W$ denote the number of not necessarily induced copies of 
$\subgraph$ in $\Gnp$
and let
$$
\lambda=\BE(W)={n\choose h}\frac{h!}{|{\rm aut}(\subgraph)|} \hat{p}^e.
$$
Define $\kappa=\kappa(\subgraph)$ by
\eq\nonumber
\kappa=\min_{\emptyset\varsubsetneq S\varsubsetneq V(\subgraph)}
|E(S)|\left(\frac{|S|}{|E(S)|}-\frac{h}{e}\right).
\en
Bollob\'as \cite{Bol}
shows Poisson convergence of $W$ through the method of moments.
Theorem~5.B of \cite{BHJ} gives the bound
\eq\label{Barbour}
d_{\rm TV}(W,P_{\lambda})=
\left\{
\begin{array}{l l}
O(1)\lambda^{1-1/e}n^{-\kappa}&{\rm if \ }\lambda\geq 1;\\
O(1)\lambda\,n^{-\kappa}&{\rm if \ }\lambda<1.
\end{array}
\right.
\en
When $\hat{p}$ is such that $\lambda\to\lambda_0$ for a constant $\lambda_0$,
then (\ref{Barbour}) implies that the distribution of
$W$ converges in total variation distance
to a Poisson$(\lambda_0)$ distrtibution. That is not the case for subgraphs
which are not strictly balanced.

The only subgraphs for which the asymptotic distribution of $X(\subgraph)$ has
been determined for $\Gp$ are $\subgraph=K_h$, the complete graphs on $h$ vertices,
in \cite{RS}, in which $X(K_h)$ was shown to have a limiting Poisson
distribution at the threshold for the appearance of $K_h$.
Theorem~\ref{prev-result} from \cite{RS} for complete graphs
is the kind of result we have in mind to extend to general $\subgraph$.
For a constant $c>0$, we parametrise $p=p(n)$ by
\begin{equation}\label{pdef}
p(n)\sim
\begin{cases}
\begin{tabular}{ll}
$c\: n^{-1}m^{-\frac{1}{h}}$\ &\ for $0<\alpha< \frac{2h}{h-1}$;\\
$c\:  n^{-\frac{h+1}{h-1}}$ \ &\ for $\alpha=\frac{2h}{h-1}$;\\
$c\: n^{-\frac{1}{h-1}}m^{-\frac{1}{2}}$\ &\ for $\alpha> \frac{2h}{h-1}$.
\end{tabular}
\end{cases}
\end{equation}
We focus on asymptotic values thus we will use standard Landau notation $O(\cdot), o(\cdot)$, $\Omega(\cdot)$, $\sim$, and $\asymp$ 
as in \cite{JLR}.
The following theorem was proved in \cite{RS}.
\begin{tw}\label{prev-result}
Let $\Gp$ be a random intersection graph defined
with $m$ and $p$ given in terms of $n$ by
(\ref{alphadef}) and (\ref{pdef})
and let $h\geq 3$ be a fixed integer.
Let $X_n$ be the random variable counting the number of instances of
$K_h$ in $\Gp$.
\begin{enumerate}
\item[(i)] If
$\alpha < \frac{2h}{h-1}$, then $\lambda_n=\mathbb{E} X_n\sim c^h/h!$ and
$$d_{TV}(X_n,P_{\lambda_n})=O\left(n^{-\frac{\alpha}{h}}\right);$$
\item[(ii)] If $\alpha = \frac{2h}{h-1}$, then
$\lambda_n=\mathbb{E} X_n\sim\left(c^h+c^{h(h-1)}\right)/h!$ and
$$d_{TV}(X_n,P_{\lambda_n})=O\left(n^{-\frac{2}{h-1}}\right);$$
\item[(iii)] If $\alpha>\frac{2h}{h-1}$, then for
$\lambda_n=\mathbb{E} X_n\sim c^{h(h-1)}/h!$ and
$$d_{TV}(X_n,P_{\lambda_n})=O\left(n^{\left(h-\frac{\alpha(h-1)}{2}-\frac{2}{h-1}\right)}+
n^{-1}\right).$$
\end{enumerate}
\end{tw}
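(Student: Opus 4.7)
Since $X_n=\sum_{i\in[N_n]}X_i$ is a sum of indicators, I would use the Chen--Stein method for Poisson approximation, as stated in \cite{BHJ}. The crucial observation is that $X_i$ depends only on the Bernoulli variables $\{\mathbf{1}_{w\in\W_v}:w\in\W,\,v\in V(H_i)\}$, so whenever $V(H_i)\cap V(H_j)=\emptyset$ the indicators $X_i$ and $X_j$ are independent. I would therefore take the dependency neighbourhood of $i$ to be $N_i=\{j\in[N_n]:V(H_i)\cap V(H_j)\neq\emptyset\}$, yielding the standard bound
\[
d_{TV}(X_n,P_{\lambda_n})\;\leq\;\min(1,\lambda_n^{-1})\,(b_1+b_2),
\]
with $b_1=\sum_{i}\sum_{j\in N_i}(\E X_i)(\E X_j)$ and $b_2=\sum_{i}\sum_{j\in N_i\setminus\{i\}}\E(X_iX_j)$.

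First I would compute $\lambda_n=\E X_n=N_n\,\Pra{X_1=1}$. Since $K_h$ has no non-edges, ``$H_1$ is induced'' just means every pair in $V(H_1)$ shares an object. Setting $S_w=\{v\in V(H_1):w\in\W_v\}$, the edges covered by $w$ are $\binom{S_w}{2}$, and the event of interest is that the union of these pair-sets covers all $\binom{h}{2}$ edges of $K_h$. I would identify the dominant covering configuration in each regime: in (i), a single object $w$ with $S_w=V(H_1)$ yields expected count $\binom{n}{h}mp^h\sim c^h/h!$; in (iii), $\binom{h}{2}$ distinct objects each covering one edge yield expected count $\sim c^{h(h-1)}/h!$; and at the threshold (ii) both contributions are of the same order and sum to $(c^h+c^{h(h-1)})/h!$. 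All remaining patterns (e.g.\ three size-$2$ objects forming a triangle, or a size-$3$ object combined with edge-objects) must be shown to be of strictly lower order, which reduces to a finite list of monomial inequalities in $m$ and $p$.

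Next I would estimate $b_1$ and $b_2$. For $b_1$, the number of ordered pairs with $|V(H_i)\cap V(H_j)|=k$ is $\Theta(n^{2h-k})$ and $\E X_i\,\E X_j=O(n^{-2h})$, so $b_1=O(n^{-1})$, which is absorbed into the stated error in each case. The heart of the argument is bounding $b_2$: for each overlap size $k\in\{1,\ldots,h-1\}$ I would enumerate all joint covering patterns of the $(2h-k)$-vertex set $V(H_i)\cup V(H_j)$, allowing the same object to contribute to both copies, and bound the total probability. In each regime the dominant pattern is expected to match the stated rate: in (i), the worst pairs have overlap $h-1$ and are both covered by a single shared object, contributing on the order of $n^{h+1}\cdot mp^{h+1}=\Theta(m^{-1/h})=\Theta(n^{-\alpha/h})$; in (ii), the analogous threshold computation produces $n^{-2/(h-1)}$; and in (iii), the bookkeeping using the $\binom{h}{2}$ edge-object pattern, together with a subleading single-object correction, yields the two-term bound.

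The main obstacle is the combinatorial case analysis in $b_2$: one must classify all joint covering patterns on $V(H_i)\cup V(H_j)$ across overlap sizes $k$ and rule out any ``unbalanced'' sub-pattern dominating. This is precisely where the three thresholds in (\ref{pdef}) appear: $p$ is tuned so that for $K_h$ no proper sub-configuration outweighs the full cover, and verifying this for each overlap $k$ produces a separate margin and hence a separate candidate error term. The abstraction of this balance condition is exactly the strictly $\alpha$-balanced property that the remainder of the paper generalises to arbitrary $\subgraph$.
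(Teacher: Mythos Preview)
The paper does not prove Theorem~\ref{prev-result}; it is quoted from \cite{RS} as motivation for the generalisation in Theorem~\ref{main}. Your plan is essentially the argument of \cite{RS}: Chen--Stein with the vertex-overlap dependency graph, identification of the dominant clique cover(s) of $K_h$ in each regime (the single $h$-clique for small $\alpha$, the edge cover for large $\alpha$), and a case analysis of joint covering patterns for $b_2$. The sample computations you give (e.g.\ $n^{h+1}mp^{h+1}\asymp n^{-\alpha/h}$ in case~(i)) are correct, and you correctly identify the finite case analysis over overlap sizes and covering patterns as the place where the work lies.

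The one structural difference worth flagging, relative to how this paper proves the general Theorem~\ref{main}, is that the paper first decomposes each $X_i$ as $\sum_{\C}X(H_i,\C)$ over clique covers and applies Stein's method to the family indexed by $[N_n]\times\covers_0$, disposing of $\C\notin\covers_0$ separately by a first-moment bound. This yields a uniform $b_2$ estimate via Lemma~\ref{moment}: the joint moment $\E(X(H_i,\C_1)X(H_j,\C_2))$ is controlled through the restricted clique cover $\C_2[V(H_i)\cap V(H_j)]$ on the overlap, and the strictly $\alpha$-balanced hypothesis converts this directly into $O(\Phi^{-1})$. That replaces your ``enumerate all joint covering patterns on $V(H_i)\cup V(H_j)$'' by a single inequality per overlap. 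For $K_h$ your direct enumeration is perfectly workable (and recovers the explicit rates, which Theorem~\ref{main} does not), but the clique-cover decomposition is what makes the argument scale to arbitrary $\subgraph$.
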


The different cases in Theorem~\ref{prev-result}
arise from the ways copies of $K_h$ can appear in $\Gp$.
The main ways are either
that a single object is responsible for the existence of
every edge in the clique or
that each edge appears because of a different object associated with it.
There are other
ways in which copies of $K_h$ can appear, but asymptotically they are
unimportant.
It may happen that one of the two main ways is dominant or that both 
ways contribute.
In case $(i)$, the first way dominates; in case $(iii)$,
 the second way dominates; and in case $(ii)$,
both cases contribute.

The ways copies can appear in $K_h$ are described in \cite{KSS} by using
the notion of clique covers, to be defined in Section~2.
Using clique covers, \cite{KSS} proves a theorem
for the model $\Gp$ showing how to compute the threshold
for the appearance of subgraphs of $\Gp$.
We will extend the idea behind the
definition of strictly balanced graphs to 
clique covers and thereby derive a Poisson approximation
result for subgraph counts in $\Gp$.
In Section~2 we define strictly balance clique covers and state
Theorem~\ref{main}, our main result.

\section{Strictly balanced clique covers}

The idea of categorising the various ways a subgraph can appear
was formalised in \cite{KSS} through the notion of clique covers.
The following definitions are taken from \cite{KSS}.
Given a fixed subgraph $\subgraph$ of $K_\V$,
define $V(\subgraph)$ and $E(\subgraph)$ to be the vertex and edge
sets of $\subgraph$, respectively.
\begin{df}
A {\em clique cover} 
$\CC=\{C_1, \ldots, C_{t}\}$ of $\subgraph$
is a set of non-empty  subsets of
$V(\subgraph)$  such that
\begin{enumerate}
\item[$(i)$] each $C_i\in\CC$ induces a clique in $\subgraph$; 
\item[$(ii)$] for any
$\{v_1,v_2\}\in E(\subgraph)$ there exists $C_i\in\CC$ such that
$v_1,v_2\in C$.
\end{enumerate}
\end{df}
\noindent If in addition 
\begin{itemize}
\item[$(iii)$] $|C_i|\ge 2$ for all $C_i\in \CC$
\end{itemize}
we call a clique cover {\em proper}.

We call $t$ the {\em size} of the clique cover.
By the definition of clique cover, the cliques induced in $K_{\V}$ by the
sets in $\CC$ cover all the edges of $\subgraph$ and no other edges.
There are clearly only a finite number of clique covers of
$\subgraph$.
We denote the finite set of {\em proper} clique covers  of $\subgraph$ 
by $\covers(\subgraph)$.


If $w\in\W_v$, then we say that $w$ has been {\em chosen} by $v$.
In $\Gp$, the set of vertices which have chosen
a particular object $w\in \W$ always form a clique in $\Gp$
and, therefore, the set of edges in $\Gp$ is the union of the $m$ edge-sets
of the cliques generated by the elements of $\W$.
\begin{df}\label{induced}
We say that $\subgraph \subseteq\Gp$
is {\em induced} by a clique cover
$\C=\{C_1, \ldots, C_{t}\}$ of $\subgraph$
if there is a family of disjoint non-empty subsets
$\{W_1, \ldots, W_{t}\}$ of $\W$, such that, 
\begin{enumerate}
\item[$(i)$]  for all $i\in [t]$, each element of $W_i$
is an object chosen by all the vertices of $C_i$ and
no other vertices from $V(\subgraph)$; 
\item[$(ii)$]  each $w\in \W\setminus \bigcup_{i=1}^t W_i$
is chosen by at most one vertex from $V(\subgraph)$.
\end{enumerate}
\end{df}
Clearly, if $\subgraph$ is an induced subgraph of $\Gp$,
then it is induced by exactly one clique cover from $\covers(\subgraph)$ in $\Gp$.

Letting $\C=\{C_1,\ldots,C_t\}$ be any clique cover of $\subgraph$ (not necessarily in $\covers(\subgraph)$),
we denote:
$|\C|=t$ and $\sum\C=\sum_{i=1}^{t}|C_i|$.
We define the clique cover derived from $\C$ containing
only cliques of size at least two by
$\C^\prime=\{C_i: |C_i|\geq 2, \ i\in[t]\}$.
Given $\emptyset\varsubsetneq S\varsubsetneq V(H)$,
we define two different types of {\em restricted clique covers},
which are multisets defined  by
$$
\C[S]:=\{C_i\cap S: |C_i\cap S|\geq 1, \ i\in[t]\}
$$
and
$$
\C^\prime[S]:=\{C_i\cap S: |C_i\cap S|\geq 2, \ i\in[t]\}.
$$

Let $\subgraph[S]$ be the subgraph of $\subgraph$ induced by $S$.
We say that restricted clique cover
$\C[S]$ (respectively $\C^\prime[S]$) induces
$\subgraph[S]$ if Definition~\ref{induced} is satisfied with
$\subgraph$ replaced by $\subgraph[S]$ and the set
$\C$ replaced by the multiset $\C[S]$
(respectively $\C^\prime[S]$).
If $\C$ induces $\subgraph$, then $\C[S]$ and $\C'[S]$ induce $\subgraph[S]$.
The subset $S$ of vertices  plays a similar role here as it does in 
definition (\ref{cond}) of strictly balanced subgraphs. 
The restricted clique covers are defined to be multisets because,
if $C_i\cap S=C_j\cap S$ for $i\neq j$,
and if $\subgraph$ is induced by $\C$, then vertices from 
$C_i\cap S=C_j\cap S$ must still
 choose objects from disjoint non-empty subsets
$W_i$, $W_j$.
Moreover, if $|C_i\cap S|=1$, then there must still be an object chosen by the single vertex in 
$C_i\cap S$ in order for $\C$ to induce $\subgraph$. This explains
why restricted cliques of size 1 are included in the definition of
$\C[S]$. It is shown in 
\cite{KSS}
that the order of the expected
number of copies of $\subgraph[S]$ induced by $\C[S]$ 
does not depend on the number of restricted cliques of size 1 when
$mp$ is bounded below, because in that case
the probability that
at least one object will choose any given vertex is bounded below. Thus,
$\C$ is important when $mp=o(1)$ and $\C^\prime$ is important
when $mp$ is bounded below.

Define the sizes of the multisets $\C[S]$, $\C^\prime[S]$ by
$$
\sum\C[S]=\sum_{\stackrel{i\in [t]}{|C_i\cap S|\geq 1}}
|C_i\cap S|
$$
and
$$
\sum\C^\prime[S]=\sum_{\stackrel{i\in [t]}{|C_i\cap S|\geq 2}}
|C_i\cap S|.
$$
Let $X(\subgraph,\C,S)$ denote the number of copies of $\subgraph[S]$ induced
by $\C$ and $\C^\prime$.
It is shown in \cite{KSS}
that, assuming $mp^2=o(1)$, 
$$
\BE(X(\subgraph,\C,S))\asymp 
\psi(\subgraph,\C,S):=
\min\left\{n^{|S|+\alpha |\C[S]|} p^{\sum\C[S]},n^{|S|+\alpha |\C^\prime[S]|} p^{\sum\C^\prime[S]}\right\}.
$$

We are interested in $p=p(n)$ such that 
$\BE(X(\subgraph,\C,S))\asymp 1$.
For this purpose define
\begin{equation}\label{EqEta2}
\eta_2(\subgraph,\C,S):=
\begin{cases}
\frac{|S|+\alpha|\CC[S]|}{\sum \CC[S]}
&\text{ if either }\alpha < \frac{|S|}{\sum \CC [S]-|\CC[S]|}\text{ or }\sum\CC [S]=|\CC[S]|;\\
\frac{|S|+\alpha |\CC'[S]|}{\sum \CC'[S]}
&\text{ otherwise,}
\end{cases}
\end{equation}
so that $\psi(\subgraph,\C,S) = 1$ when $p=n^{-\eta_2(\subgraph,\C,S)}$, and define
\begin{equation}\label{EqEta1}
\eta_1(\subgraph,\C):=
\min_{\emptyset\varsubsetneq S\subseteq V(\subgraph)}
\eta_2(\subgraph,\C,S).
\end{equation}
The previous comments indicate that $p=n^{-\eta_1(\subgraph,\C)}$ should be
a threshold for $\C$ inducing copies of $\subgraph$. In other words, if
$p\ll n^{-\eta_1(\subgraph,\C)}$, then $\C$ will induce 0 copies of $\subgraph$
almost surely (as $n\to\infty$), but, if $p\asymp n^{-\eta_1(\subgraph,\C)}$,
then $\C$ will induce some copies of $\subgraph$ with positive probability.
Thus, if we define
\begin{equation}\label{EqEta0}
\eta_0=\eta_0(\subgraph):=\max_{\CC\in\covers(\subgraph)} \eta_1(\subgraph,\C),
\end{equation}
then it is natural to expect that under suitable conditions
$p=n^{-\eta_0(\subgraph)}$ should be the threshold for the appearance of 
$\subgraph$ in $\Gp$. The main result of \cite{KSS}, which applies for
general $m$, not just $m$ of the form (\ref{alphadef}), shows
that $n^{-\eta_0(\subgraph)}$ actually
is the threshold for the appearance of $\subgraph$ in $\Gp$.

We now proceed with our results for Poisson approximation.
We call a clique cover $\C\in\covers(\subgraph)$ 
{\em strictly} $\alpha$-{\em balanced} if
$\eta_2(\subgraph,\C,S)>\eta_2(\subgraph,\C,V(\subgraph))$
for all $\emptyset\varsubsetneq S\varsubsetneq V(\subgraph)$.
The clique covers which will induce copies of $\subgraph$ at threshold
$p=n^{-\eta_0(\subgraph)}$ are those in the set
\begin{equation}\label{EqC0}
\covers_0=\covers_0(\subgraph):=\{\CC\in\covers(\subgraph): \eta_1(\subgraph,\CC)=\eta_0\}.
\end{equation}
We call $\subgraph$ 
strictly $\alpha$-balanced if 
all $\C\in\covers_0$ are strictly $\alpha$-balanced.

Our main result, Theorem~\ref{main}, 
gives new conditions
for the Poisson convergence of $X(\subgraph)$ at the threshold of
appearance of $\subgraph$. Theorem~\ref{main}  can be applied,
for example, when $\subgraph$
is a $K_h$, a $C_h$ (cycle), or triangle-free.
\begin{tw}\label{main}
Let $\subgraph$ be a given graph, $m={\lfloor n^\alpha\rfloor}$, for $\alpha>0$,  $\eta_0$ be given by \eqref{EqEta2}, \eqref{EqEta1} and \eqref{EqEta0}, and $\covers_0$ be defined by \eqref{EqC0}.
Suppose that  
$p=cn^{-\eta_0}$ for a constant $c>0$
and that $mp^2=o(1)$.
If $\subgraph$ is strictly $\alpha$-balanced, then 
$$
d_{TV}(X,P_{\lambda_0})=o(1)
$$
for 
$$
\lambda_0=
\frac{1}{|{\rm aut}(\subgraph)|} 
\sum_{\C\in\covers_0(\subgraph)}c^{\sum\C},
$$
where $X$ is the number of induced copies of $\subgraph$ in $\Gp$.
\end{tw}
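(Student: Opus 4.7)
The plan is to apply the Chen--Stein method with dependency neighbourhoods to the decomposition \eqref{decomp}. The crucial observation is that each indicator $X_i$ is a function of $\{\W_v:v\in V(H_i)\}$ alone, since whether $H_i$ is induced in $\Gp$ depends only on the adjacencies among vertices of $H_i$. As the families $\W_v$ ($v\in\V$) are mutually independent, setting $B_i=\{j\in[N_n]:V(H_j)\cap V(H_i)\neq\emptyset\}$ makes $X_i$ independent of the $\sigma$-algebra generated by $\{X_j:j\notin B_i\}$, so the Chen--Stein theorem gives
\[
d_{TV}(X,P_\lambda)\le\min(1,1/\lambda)(b_1+b_2),
\]
with $\lambda=\E[X]$, $b_1=\sum_i\sum_{j\in B_i}\E[X_i]\E[X_j]$, and $b_2=\sum_i\sum_{j\in B_i,\,j\neq i}\E[X_iX_j]$.

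\textbf{Convergence of $\lambda$ and bound on $b_1$.} Decompose $\E[X_i]=\sum_{\C\in\covers(H_0)}\mathbb{P}(H_i\text{ induced by }\C)$ according to the unique clique cover inducing $H_i$. Using the asymptotic $\E[X(H_0,\C,V(H_0))]\asymp\psi(H_0,\C,V(H_0))$ recorded from \cite{KSS}, together with a Poisson approximation for the number of objects chosen by each prescribed subset of $V(H_0)$ (valid since $mp^2=o(1)$), each $\C\in\covers_0$ contributes $c^{\sum\C}/|{\rm aut}(H_0)|$ to $\lambda$ in the limit, whereas $\C\notin\covers_0$ has $\eta_1(H_0,\C)<\eta_0$ and contributes $o(1)$; hence $\lambda\to\lambda_0$. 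For $b_1$, every $v\in V(H_i)$ lies in $O(N_n/n)$ copies of $H_0$, so $|B_i|=O(N_n/n)$, and by symmetry $\E[X_i]=\lambda/N_n$; thus $b_1=O(\lambda^2/n)=o(1)$.

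\textbf{Bounding $b_2$ via strict $\alpha$-balance.} The substantive estimate is on $b_2$. Partition the sum over $(i,j)$ by the overlap size $s=|V(H_i)\cap V(H_j)|\in\{1,\dots,h-1\}$, and for each overlap $S$ further by the pair $(\C_i,\C_j)\in\covers(H_0)^2$ of clique covers inducing the two copies. Joint induction forces the object-sets $W_k^i$ and $W_\ell^j$ of Definition~\ref{induced} to be disjoint and no further object to be shared by two vertices of $V(H_i)\cup V(H_j)$; an object belonging simultaneously to some $W_k^i$ and some $W_\ell^j$ must satisfy $C_k^i\cap S=C_\ell^j\cap S$. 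Estimating the joint probability in the spirit of the $\psi$-formula on $V(H_i)\cup V(H_j)$ and multiplying by the $O(n^{2h-s})$ pairs with overlap size $s$ gives a contribution whose order relative to $\lambda^2$ is bounded (when $\C_i,\C_j\in\covers_0$) by a polynomial $n^{-\delta}$ with $\delta>0$, precisely because the strictly $\alpha$-balanced hypothesis supplies $\eta_2(H_0,\C,S)>\eta_0$ for every $\C\in\covers_0$ and $\emptyset\varsubsetneq S\varsubsetneq V(H_0)$. Terms with $\C_i\notin\covers_0$ or $\C_j\notin\covers_0$ are controlled by $\eta_1<\eta_0$ and are even smaller. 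Summing over the finitely many $s,\C_i,\C_j$ yields $b_2=o(1)$.

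\textbf{Main obstacle.} The principal difficulty lies in the combinatorial book-keeping for $b_2$: for each triple $(\C_i,\C_j,S)$ one must correctly catalogue how objects can simultaneously serve cliques of $\C_i$ and of $\C_j$, so that the restricted clique covers $\C_i[S]$, $\C_j[S]$ and their proper counterparts $\C_i'[S]$, $\C_j'[S]$ govern the exponents that actually appear, and one must handle the secondary non-edge constraint that no object outside the $W$-family creates an edge inside $V(H_i)\cup V(H_j)$. The assumption $mp^2=o(1)$ is invoked throughout to discard the negligible event that an object is chosen by more than one vertex outside a prescribed $W_k$, thereby justifying the Poisson-type approximation that converts the inequalities defining strict $\alpha$-balance into the required polynomial saving in $n$.
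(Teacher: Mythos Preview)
Your overall Stein--Chen framework is the same as the paper's, and your sketch for $b_2$ is in the spirit of the paper's second-moment Lemma~\ref{moment}. However, there is a genuine gap in your treatment of clique covers outside $\covers_0$.

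You assert that for $\C\notin\covers_0$ the contribution to $\lambda=\E[X]$ is $o(1)$ ``because $\eta_1(H_0,\C)<\eta_0$'', and you dismiss the corresponding $b_2$ terms on the same grounds. But $\eta_1(H_0,\C)<\eta_0$ only says there exists \emph{some} proper subset $S$ with $\eta_2(H_0,\C,S)<\eta_0$; it does \emph{not} say $\eta_2(H_0,\C,V(H_0))<\eta_0$. It is entirely possible that $\eta_2(H_0,\C,V(H_0))>\eta_0$, in which case $N_n\,\pi(H_0,\C)\asymp\psi(H_0,\C,V(H_0))\to\infty$ at $p=cn^{-\eta_0}$. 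The paper explicitly flags this: ``We do not know if it is possible that $\E(X)\not\to\lambda_0$.'' Consequently your claim $\lambda\to\lambda_0$ is unjustified, and without it you cannot pass from $d_{TV}(X,P_\lambda)=o(1)$ to $d_{TV}(X,P_{\lambda_0})=o(1)$. The same defect undermines your dismissal of the $b_2$ terms with $\C_i\notin\covers_0$ or $\C_j\notin\covers_0$.

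The paper's fix is precisely to avoid this issue: it splits $X=Y_0+Y_1$ with $Y_0$ counting copies induced by covers in $\covers_0$ and $Y_1$ the rest, applies Stein--Chen only to $Y_0$ (where strict $\alpha$-balance gives $\E[Y_0]\to\lambda_0$ and drives the $b_2$ estimate through Lemma~\ref{moment}), and then shows $\Pra{Y_1>0}=o(1)$ \emph{not} by bounding $\E[Y_1]$ but by a first-moment argument on the restricted subgraph: if some $H_i$ were induced by $\C\notin\covers_0$, then $H_i[S]$ would be induced by $\C[S]$ for the bad $S$, and the expected number of such restricted copies is $\psi(H_0,\C,S)=o(1)$. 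This is the step your proposal is missing.
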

\noindent 
The meaning of $\lambda_0$ is that
it is the limit of the number of copies of $\subgraph$ induced by
clique covers in $\covers_0$.
We do not know if it is possible that $\BE(X)\not\to\lambda_0$.

Some of our results might be obtained using reasoning from~\cite{KR} to the random bipartite graph with bipartition $(\V,\W)$,
however, due to double counting of ways subgraphs
can be generated, it would seem to require great effort. 
Moreover, the treatment of the problem in this article might be 
useful in the developing arguments for other problems concerning subgraph 
counts in $\Gp$.

To illustrate Theorem~\ref{main}, we apply it to a
triangle-free $\subgraph$ and compare the bounds obtained
to related results for $\Gnp$ with $\hat{p}= mp^2$.
We obtain the following bound:
\begin{cor}\label{CorTrianglefree}
Let $\subgraph$ be strictly balanced triangle-free graph on $h$ vertices and with $e$ edges. If $\alpha>h/e$ then for $p=cn^{\frac{h+\alpha e}{2e}}$ we have  
\eq\label{strictly}
d_{\rm TV}\left(X,P_{\lambda_0}\right)=o(1),
\en
for $\lambda_0=\frac{c^{2e}}{|{\rm aut}(\subgraph)|}$,
where $X$ is the number of induced copies of $\subgraph$.
\end{cor}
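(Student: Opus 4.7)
My plan is to derive Corollary \ref{CorTrianglefree} from Theorem \ref{main} by verifying each of its hypotheses for the triangle-free strictly balanced graph $\subgraph$. The starting observation is that, since $\subgraph$ has no triangles, every clique in $\subgraph$ has at most two vertices, so the only proper clique cover of $\subgraph$ is the edge cover $\C_0=\{\{u,v\}:\{u,v\}\in E(\subgraph)\}$, giving $|\C_0|=e$ and $\sum\C_0=2e$. Hence $\covers(\subgraph)=\covers_0(\subgraph)=\{\C_0\}$, and the sum defining $\lambda_0$ in Theorem \ref{main} collapses to the single term $c^{2e}/|{\rm aut}(\subgraph)|$ appearing in the corollary.

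Next I would compute $\eta_2(\C_0,S)$ for $S\subseteq V(\subgraph)$. Writing $e_S=|E(S)|$ and letting $e_S^+$ be the number of edges of $\subgraph$ with at least one endpoint in $S$, a direct count gives $|\C_0[S]|=e_S^+$, $\sum\C_0[S]=e_S+e_S^+$, $|\C_0'[S]|=e_S$, and $\sum\C_0'[S]=2e_S$. Thus $\sum\C_0[S]-|\C_0[S]|=e_S$, and the second branch of \eqref{EqEta2} is in force exactly when $e_S>0$ and $\alpha\ge |S|/e_S$. Taking $S=V(\subgraph)$, where $e_S=e_S^+=e$, the hypothesis $\alpha>h/e$ selects the second branch and gives $\eta_2(\C_0,V(\subgraph))=(h+\alpha e)/(2e)$, matching the exponent in the stated choice of $p$. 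The side condition $mp^2=o(1)$ is then immediate since $mp^2\asymp n^{\alpha-(h+\alpha e)/e}=n^{-h/e}=o(1)$.

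The crux is showing that $\subgraph$ is strictly $\alpha$-balanced, i.e.\ that $\eta_2(\C_0,S)>(h+\alpha e)/(2e)$ for every $\emptyset\varsubsetneq S\varsubsetneq V(\subgraph)$. I would split on the branch of \eqref{EqEta2}. In the second branch, after clearing denominators, the inequality simplifies to $e_S/|S|<e/h$, which is precisely the classical strict balance condition \eqref{cond} for $\subgraph$. In the first branch, where either $e_S=0$ or $\alpha<|S|/e_S$, the inequality rearranges to $\alpha e\,(e_S^+-e_S)>h(e_S+e_S^+)-2e|S|$; here I would exploit $e_S^+\ge e_S$ together with $\alpha>h/e$ to absorb the left-hand side and reduce once more to strict balance of $\subgraph$, handling the extremal subcases $e_S=0$ and $e_S^+=e_S$ directly. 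I expect this first-branch estimate to be the main obstacle, since it genuinely uses both hypotheses $\alpha>h/e$ and the strict balance of $\subgraph$ (neither on its own suffices); once in hand, Theorem \ref{main} immediately yields \eqref{strictly}.
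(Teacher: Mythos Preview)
Your proposal is correct and follows essentially the same route as the paper: identify the unique proper clique cover $\C_0=E(\subgraph)$, compute $\eta_2(\C_0,V(\subgraph))=(h+\alpha e)/(2e)$, check $mp^2=o(1)$, and verify strict $\alpha$-balance by treating the two branches of \eqref{EqEta2} separately, with the second branch reducing directly to the classical strict-balance inequality $e_S/|S|<e/h$.

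The only notable difference is in the first-branch case, which you flag as the main obstacle. The paper handles it by a cleaner rewriting that avoids your rearranged inequality altogether: since $|\C_0[S]|=\sum\C_0[S]-e_S$, one has
\[
\frac{|S|+\alpha|\C_0[S]|}{\sum\C_0[S]}
=\frac{|S|-\alpha e_S}{\sum\C_0[S]}+\alpha
\ge \alpha,
\]
because the first-branch condition is precisely $|S|-\alpha e_S\ge 0$; then $\alpha>h/e$ immediately gives $\alpha>\tfrac{h}{2e}+\tfrac{\alpha}{2}=(h+\alpha e)/(2e)$. This uses only $\alpha>h/e$ (not strict balance of $\subgraph$), so the case is in fact easier than you anticipate. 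Your proposed route via $\alpha e(e_S^+-e_S)>h(e_S+e_S^+)-2e|S|$ also works (combining $\alpha e>h$ with $e|S|>h e_S$ and treating $e_S^+=e_S$ separately), but it is more laborious and unnecessarily invokes strict balance in this branch.
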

\noindent For the proof of Corollary~\ref{CorTrianglefree} see the Appendix.

The bound (\ref{strictly}) gives Poisson convergence 
analagous to 
(\ref{Barbour}) for $\Gnp$ with $\hat{p}=mp^2$, but only applies for
$\alpha>h/e$. It is typical for results in $\Gp$ to show behaviour
similar to that in $\Gnp$ for $\alpha$ large enough or small $p$. A result
of \cite{FSC} and \cite{Kasia}  show equivalence in total variation distance
between $\Gp$ and $G(n,\hat{p})$ for
$\hat{p}$ chosen appropriately when $\alpha>6$ or $p=o(n^{-1}m^{-1/3})$.
Note that the results of \cite{Kasia} concerning $\alpha>3$  do not apply because
the event of having an induced subgraph is not an increasing event.

For the case $\alpha\le h/e$,
recall that for a triangle-free graph $\subgraph$, the set $\covers(\subgraph)$ contains only one clique cover $\C$ consisting of $2$--element sets. Moreover, for a strictly balanced $\subgraph$, if $\alpha\le h/e$, then for every $S \subseteq V(\subgraph)$ such that $\subgraph[S]$ has at least one edge
$$
\frac{|S|}{\sum \C[S]-|\C|}=\frac{|S|}{E(S)} \ge \frac{h}{e} \ge \alpha.
$$
Thus $\C[S]$ always contributes to $\eta_2$ and, in general, we should not expect similarity with $\Gnp$. However the following special cases show that, depending on the graph the value $h/e$ is not always critical. 
Theorem~\ref{main}, Corollary~\ref{CorTrianglefree}, and simple calculations lead to the following. 
\begin{ex}
Let $C_t$ be a cycle on $t\ge 4$ vertices and $p=c n^{-\frac{1}{2}-\frac{\alpha}{2}}$. Then for any $\alpha>0$ and $\lambda_0=\frac{c^{2t}}{2t}$ we have ${\rm d_{TV}}(X(C_k),Po(\lambda_0))=o(1)$
\end{ex}

\begin{ex} Let
$K_{k,t}$ be a bipartite graph with $t>k$. Then for $p=cn^{-\frac{k+t}{2kt}-\frac{\alpha}{2}}$ and $\lambda_0=\frac{c^{2kt}}{k!t!}$
 we have ${\rm d_{TV}}(X(K_{k,t}),Po(\lambda_0))=o(1)$ for $\alpha > \frac{t-k}{tk}$. 
\end{ex}

Results on the asymptotic probabilities of clique covers
which give the asymptotics of $\E(X)$
are derived in Section~3. 
Section~4 contains results
regarding the second moment of $X$.
In Section~5, we use Stein's method to prove Theorem~\ref{main}.

\section{Asymptotic probabilities}
Lemma~\ref{LematSzacowania}, which shows that
the numbers of objects inducing cliques are asymptotically independent,
 is similar to Lemma~1 of \cite{KSS}.
One difference between Lemma~\ref{LematSzacowania} and Lemma~1 of \cite{KSS} 
is that our conditions on $A_0$ are stated explicitly and another
is that we present the proof in full detail.
In Lemma~\ref{LematSzacowania} and Lemma~\ref{cliqueasymp} we
allow the possiblity that $|C_i|=1$. Thus $\C[S]$ and $\C^\prime[S]$
are written as
$\C$ in those lemmas to simplify notation.
\begin{lem}\label{LematSzacowania}
Given a clique cover $\C=\{C_1,C_2,\ldots,C_r\}$
of $\subgraph$,
let $\cmin=\min_{1\leq i\leq r}|C_i|$. Moreover, let $N_i$ be the number of objects, which have been chosen by all vertices from $C_i$ and no vertex from 
$V(\subgraph)\setminus C_i$ in $\Gp$ and let $\tilde{N}_i$ have Poisson distribution $\Po{mp^{|C_i|}}$. If $mp^{\cmin+1}=o(1)$ then
$$
\Pra{\bigcap_{i=1}^r \{N_i = a_i\}}
\sim
\prod_{i=1}^r\Pra{\tilde{N}_i = a_i}
$$
uniformly over all $a_i\le A_0$ for any 
$A_0=A_0(n)$ satisfying $A_0=o(\sqrt{m})$ and $A_0=o(p^{-\cmin})$.
\end{lem}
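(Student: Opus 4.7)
The plan is to identify the exact joint distribution of $(N_1,\ldots,N_r)$ and then compare it, factor by factor, with the product of the corresponding Poisson pmfs. For each object $w\in\W$, the $r$ events ``$w$ is chosen by exactly the vertices of $C_i$ within $V(\subgraph)$'' are pairwise disjoint (they pick out distinct subsets of $V(\subgraph)$), each of probability $q_i:=p^{|C_i|}(1-p)^{h-|C_i|}$. Since the $m$ objects make their choices independently, setting $q_0:=1-\sum_i q_i$ and $s:=\sum_i a_i$ gives the exact $\mathrm{Multinomial}(m;q_1,\ldots,q_r,q_0)$ formula
$$
\Pra{\bigcap_{i=1}^{r}\{N_i=a_i\}}=\frac{m!}{a_1!\cdots a_r!\,(m-s)!}\,q_0^{m-s}\prod_{i=1}^{r}q_i^{a_i}.
$$

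Dividing by $\prod_i e^{-\mu_i}\mu_i^{a_i}/a_i!$ with $\mu_i:=mp^{|C_i|}$, the $a_i!$'s cancel and the resulting ratio factors as
$$
\Bigl[\prod_{j=0}^{s-1}\!\Bigl(1-\tfrac{j}{m}\Bigr)\Bigr]\cdot\Bigl[\prod_i(1-p)^{(h-|C_i|)a_i}\Bigr]\cdot\Bigl[q_0^{m-s}\,e^{\sum_i\mu_i}\Bigr].
$$
I would handle each bracket separately. The first equals $1+O(s^2/m)=1+o(1)$ because $s\le rA_0=o(\sqrt m)$. For the third, the expansion $\log q_0=-\sum_i q_i-O((\sum_i q_i)^2)$ combined with $\sum_i(p^{|C_i|}-q_i)=O(p^{\cmin+1})$ rearranges the exponent as $(m-s)\log q_0+\sum_i\mu_i=O(mp^{\cmin+1})+O(sp^{\cmin})+O(mp^{2\cmin})=o(1)$, using the hypotheses $mp^{\cmin+1}=o(1)$, $A_0=o(p^{-\cmin})$, and the boundedness of $p^{\cmin-1}$ (which holds since $\cmin\ge 1$ and $p\to 0$), respectively.

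The main obstacle is the middle bracket, $(1-p)^{\sum_i(h-|C_i|)a_i}=\exp\bigl(-p\sum_i(h-|C_i|)a_i(1+O(p))\bigr)$, which must be shown to be $1+o(1)$ uniformly over $a_i\le A_0$; this is where all three hypotheses $A_0=o(\sqrt m)$, $A_0=o(p^{-\cmin})$, and $mp^{\cmin+1}=o(1)$ must be jointly exploited. A natural approach is to split by clique size: for $i$ with $|C_i|>\cmin$ one has $\mu_i=O(mp^{\cmin+1})=o(1)$, so the event $a_i\ge 1$ is rare and the contributions to the two sides must be matched pair-by-pair; for $i$ with $|C_i|=\cmin$, the bound $A_0p^{\cmin}=o(1)$ (equivalent to $A_0=o(p^{-\cmin})$) provides the needed control. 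Once each of the three brackets is shown to be $1+o(1)$ uniformly in $(a_1,\ldots,a_r)$ with $a_i\le A_0$, the lemma follows.
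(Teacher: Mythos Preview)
Your overall plan --- write down the exact multinomial law of $(N_1,\ldots,N_r)$ and compare it factor by factor with the Poisson product --- is precisely the paper's approach, and your analysis of the first and third brackets is correct and matches the paper's computation.

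The gap is in the middle bracket, which you wrongly flag as the main obstacle and then attack with an argument that does not work. The bracket is actually the simplest of the three: since $\sum_i(h-|C_i|)a_i\le hrA_0$, one has $(1-p)^{\sum_i(h-|C_i|)a_i}=\exp(O(A_0p))$, and all that is required is $A_0p=o(1)$. Your proposed split by clique size fails on both halves. For indices with $|C_i|>\cmin$, the ``rarity'' of $a_i\ge 1$ is irrelevant: the lemma asserts the asymptotic uniformly over \emph{every} tuple with $a_i\le A_0$, including $a_i=A_0$, so you cannot discard such tuples on probabilistic grounds (and ``both sides small'' is not the same as ``ratio tends to $1$''). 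For indices with $|C_i|=\cmin$, the contribution to the exponent is $(h-\cmin)a_ip=O(A_0p)$, not $O(A_0p^{\cmin})$; the hypothesis $A_0p^{\cmin}=o(1)$ does not control this unless $\cmin=1$.

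The honest resolution is that $A_0p=o(1)$ follows immediately from $A_0=o(p^{-\cmin})$ when $\cmin=1$, and this is the only case the paper actually invokes (the application in Lemma~\ref{cliqueasymp} verifies ``$A_0p=o(1)$'' explicitly). For $\cmin\ge 2$ the stated hypothesis is genuinely too weak to force the middle bracket to $1$; the paper's own proof contains the same slip, writing $(1-p_i)^{a_i(h-|C_i|)}$ where $(1-p)^{a_i(h-|C_i|)}$ is meant and then bounding by $\exp(O(hA_0p^{\cmin}))$ rather than the correct $\exp(O(hA_0p))$.
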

\begin{proof}
Define 
\begin{equation}\label{pidef}
p_i=p^{|C_i|}(1-p)^{h-|C_i|},\quad 1\leq i\leq r,
\end{equation}
which is the probability that a given object is chosen by all 
vertices in $C_i$ and no other vertices in $V(\subgraph)$.
Note that $p_i\leq p^{\cmin}$.
Let $p_0=1-\sum_{i=1}^rp_i$. Define $a_0=m-\sum_{i=1}^ra_i$.
If $1\le a_i\le A_0$ for all $1\leq i\leq r$, then
\begin{align*}
&\Pra{\bigcap_{i=1}^r \{N_i = a_i\}}
=
\binom{m}{a_0,\ldots,a_t}
p_0^{a_0}\prod_{i=1}^rp_i^{a_i}\\
&=
\frac{m!}{(m-\sum_{i=1}^r a_i)!}
\left(1-\sum_{i=1}^rp_i\right)^{m-\sum_{i=1}^r a_i}
\prod_{i=1}^r\frac{p_i^{a_i}}{a_i!}
\\
&=
m^{\sum_{i=1}^ra_i}
\exp\left(O\left(\frac{(rA_0)^2}{m}\right)\right)
\exp\left(-m\sum_{i=1}^rp_i+O\left(r^2A_0p^{\cmin}+mr^2p^{2\cmin}\right)\right)
\prod_{i=1}^r\frac{p_i^{a_i}}{a_i!}
\\
&\sim
\exp\left(-m\sum_{i=1}^rp_i\right)
\prod_{i=1}^r\frac{(mp_i)^{a_i}}{a_i!}
\\
&=
\exp\left(-\sum_{i=1}^rmp^{|C_i|}+O\left(mp^{\cmin+1}\right)\right)
\prod_{i=1}^r\frac{(mp^{|C_i|})^{a_i}}{a_i!}(1-p_i)^{a_i(h-|C_i|)}
\\
&\sim
\prod_{i=1}^re^{-mp^{|C_i|}}\frac{(mp^{|C_i|})^{a_i}}{a_i!}\exp\left(O(hA_0p^{\cmin})\right)
\\
&\sim
\prod_{i=1}^re^{-mp^{|C_i|}}\frac{(mp^{|C_i|})^{a_i}}{a_i!}
=
\prod_{i=1}^r\Pra{\tilde{N}_i = a_i}.
\end{align*}
\end{proof}

We let $\pi(\subgraph,\CC)$
denote the probability that $\subgraph$ is induced by clique cover
$\CC$. Using this definition, the fact that $H_i\in \subgraphs$ can be induced by
at most one clique cover from $\covers(\subgraph)$, and symmetry, we calculate
that, for each $i\in[N_n]$, the expectation of the random
variable $X_i$ appearing in (\ref{decomp}) equals
\eq\nonumber
\E(X_i)=\sum_{\CC\in\covers(H_i)} \pi(H_i,\CC)
=\sum_{\CC\in\covers(\subgraph)} \pi(\subgraph,\CC).
\en

In (\ref{state1}) below, we give asymptotics for $\pi(\subgraph,\C)$.
The similar results
on pages 138--139 of \cite{KSS}
only provide asymptotics when $mp=o(1)$ and $\C$ has no cliques of size 1.
It is indeed the case that $mp\to\infty$ in the examples 
after Corollary~\ref{CorTrianglefree} when $\alpha>1$.
The order estimate (\ref{state2}) was obtained in \cite{KSS}.
Recall that if $mp^2=o(n^{-2})$ then with probability 
tending to one as $n\to\infty$ $\Gp$ is edgeless and if $\ln n=o(mp^2)$, then  with probability tending to one $\Gp$ is the complete graph (see \cite{FSC}).  
\begin{lem}\label{cliqueasymp}
Given a clique cover $\C=\{C_1,C_2,\ldots,C_r\}$
of $\subgraph$, let 
$\I_1=\{1\leq i\leq r:|C_i|=1\}$ and
$\I_2=\{1\leq i\leq r:|C_i|\geq 2\}$.
Then, 
\begin{equation}\label{state1}
\pi(\subgraph,\C)\sim 
(1-e^{-mp})^{|\I_1|}\prod_{i\in\I_2}mp^{|C_i|},
\quad {\rm for \ }\Omega(n^{-1})=mp^2=o(1).
\end{equation}
It follows that
\begin{equation}\label{state2}
\pi(\subgraph,\C)\asymp
\min\left\{m^{|\C|}p^{\sum\C}, m^{|\C^\prime|}p^{\sum\C^\prime} \right\}.
\end{equation}
\end{lem}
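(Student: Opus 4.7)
My plan is to recast ``$\subgraph$ is induced by $\C$'' as an event on atomic object-counts to which Lemma~\ref{LematSzacowania} applies directly. For each $A\subseteq V(\subgraph)$ with $|A|\geq 1$, let $N_A$ be the number of objects chosen by exactly the vertices of $A$ from within $V(\subgraph)$. A short check against Definition~\ref{induced} (taking each $W_i$ to be the full set of objects chosen by exactly $C_i$) shows that $\{\subgraph\text{ is induced by }\C\}$ equals
$$
\bigcap_{i=1}^{r}\{N_{C_i}\geq 1\}\;\cap\;\bigcap_{\substack{A\subseteq V(\subgraph)\\|A|\geq 2,\ A\notin\C}}\{N_A=0\}.
$$
The second family forbids extra edges of $\Gp$ inside $V(\subgraph)$; these constraints are what make the copy of $\subgraph$ genuinely \emph{induced} and witnessed by $\C$ specifically. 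The proof of Lemma~\ref{LematSzacowania} never uses that the $C_i$'s induce cliques of $\subgraph$, only that they are distinct subsets, so the same calculation applies to the enlarged collection of all $A\subseteq V(\subgraph)$ with $|A|\geq 1$, giving that these $N_A$'s are asymptotically independent Poissons with means $mp^{|A|}$.

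Next I would expand via inclusion--exclusion over the events $\{N_{C_i}\geq 1\}=\{N_{C_i}=0\}^{c}$. Each resulting term is of the form $\Pra{N_A=0\ \text{for all}\ A\in\mathcal{A}}$, which by (the extension of) Lemma~\ref{LematSzacowania} with all $a_i=0$ is asymptotic to $\prod_{A\in\mathcal{A}}e^{-mp^{|A|}}$. The signed sum then factors cleanly as
$$
\prod_{\substack{A\subseteq V(\subgraph)\\|A|\geq 2,\ A\notin\C}}e^{-mp^{|A|}}\;\cdot\;\prod_{i=1}^{r}\left(1-e^{-mp^{|C_i|}}\right).
$$
Since there are only $O(1)$ such subsets $A$ and each has $mp^{|A|}\leq mp^{2}=o(1)$, the first product is $1+o(1)$. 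For $i\in\I_2$ we use $1-e^{-mp^{|C_i|}}\sim mp^{|C_i|}$ (again because $mp^{|C_i|}=o(1)$), whereas for $i\in\I_1$ the factor $1-e^{-mp}$ must be kept as is, since $mp$ may be of order $1$ or larger. This yields~(\ref{state1}).

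For~(\ref{state2}), I observe that $1-e^{-mp}\asymp\min\{mp,1\}$, so $(1-e^{-mp})^{|\I_1|}\asymp\min\{(mp)^{|\I_1|},1\}$. Multiplying by $\prod_{i\in\I_2}mp^{|C_i|}=m^{|\C^\prime|}p^{\sum\C^\prime}$ and using $|\C|=|\I_1|+|\C^\prime|$, $\sum\C=|\I_1|+\sum\C^\prime$, converts the two branches of the minimum into $m^{|\C|}p^{\sum\C}$ and $m^{|\C^\prime|}p^{\sum\C^\prime}$ respectively. The main obstacle is the bookkeeping of the ``forbidden'' events $\{N_A=0\}$ for $A\notin\C$: one must see both that they are necessary to express ``$\subgraph$ induced by $\C$'' as an event on the atomic counts and that, precisely because of the hypothesis $mp^2=o(1)$, they contribute only an asymptotically negligible factor --- which is the sparseness input that makes the clean product form in~(\ref{state1}) emerge.
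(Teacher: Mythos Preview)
Your setup is the same as the paper's: express $\{\subgraph\text{ induced by }\C\}$ via the atomic counts $N_A$ and invoke Lemma~\ref{LematSzacowania} on an enlarged collection of subsets. Your derivation of~(\ref{state2}) from~(\ref{state1}) is also correct and is exactly what the paper does.

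The gap is in the inclusion--exclusion step for~(\ref{state1}). Each term $P_S:=\Pra{N_A=0\text{ for all }A\in\mathcal{A}_S}$ is of order~$1$, and Lemma~\ref{LematSzacowania} only gives $P_S=Q_S\bigl(1+o(1)\bigr)$ with $Q_S=\prod_{A\in\mathcal{A}_S}e^{-mp^{|A|}}$. The signed sum $\sum_S(-1)^{|S|}Q_S$ factors to $\prod_{i}(1-e^{-mp^{|C_i|}})$ times a $(1+o(1))$, but this target is of order $\prod_{i\in\I_2}mp^{|C_i|}$, which is $o(1)$ whenever $\I_2\neq\emptyset$. From $P_S-Q_S=o(1)$ you obtain only $\sum_S(-1)^{|S|}(P_S-Q_S)=o(1)$, not $o(\pi(\subgraph,\C))$; the relation $\sim$ does not survive an alternating sum with this much cancellation. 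Concretely, take $\C=\{C_1,C_2\}$ with $|C_1|=|C_2|=2$ (so $c_{\min}=2$ for every $\mathcal{A}_S$); the error in Lemma~\ref{LematSzacowania} is $O(mp^3)$ per term, while $\pi(\subgraph,\C)\asymp(mp^2)^2$, and $mp^3=o\bigl((mp^2)^2\bigr)$ requires $mp\to\infty$, which the hypotheses do not guarantee (e.g.\ $m=n$, $p=n^{-1}$).

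The paper sidesteps this by expanding the events $\{N_{C_i}\ge 1\}$ in the \emph{other} direction, as a sum over $\{N_{C_i}=a_i\}$ for $1\le a_i\le A_0$, after a Chernoff truncation at $A_0\asymp\max\{mp,\log n\}$ (this is where the lower bound $mp^2=\Omega(n^{-1})$ is used, to make the discarded tail $o(\pi(\subgraph,\C))$). Since every summand is nonnegative and Lemma~\ref{LematSzacowania} is uniform in the $a_i$'s, the asymptotic passes through the sum directly, with no cancellation to control. Your inclusion--exclusion route can be repaired, but only by tracking the error terms in Lemma~\ref{LematSzacowania} more finely and showing they too cancel to the required order; as written, the step ``the signed sum then factors cleanly'' is not justified.
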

\begin{proof} 
Let $(C_{r+1},C_{r+2},\dots,C_t)$ list the subsets of $V(\subgraph)$
not in $\C$ of cardinality of at least $2$.
For each $1\leq i\leq t$, let $N_i$ be the number of objects which have been chosen by every vertex in $C_i$, 
and by no vertex in $V(H)\setminus C_i$.
Let $\tilde{N_i}$ be a random variable with the Poisson distribution
$
\Po{mp^{|C_i|}}.
$
The distribution of $N_i$ is ${\rm Binomial}(m,p_i)$, where
$p_i$ is defined by (\ref{pidef}). Chernoff's bound (see for example Theorem~2.1 \cite{JLR})
implies that for $A$ large enough  $\Pra{N_i\ge A_0} = o(\pi(\subgraph,\C))$ and 
$\Pra{\tilde{N_i}\ge A_0} = o(\pi(\subgraph,\C))$, where 
$
A_0(n)= A\max\{mp,\log n\}.
$
Therefore,
\begin{eqnarray*}
\pi(\subgraph,\C)
&=&\Pra{\bigcap_{i=1}^r\{N_i\ge 1\}\cap \bigcap_{j=r+1}^t\{N_j = 0\}}
\\
&=&
\sum_{
a_i\ge 1 \ {\rm for \ }1\leq i\leq r
}
\Pra{\bigcap_{i=1}^r\{N_i=a_i\}\cap \bigcap_{j=r+1}^t\{N_j = 0\}}
\\
&=&
\sum_{
1\le a_i\le A_0\text{ for }1\leq i\leq r
}
\Pra{\bigcap_{i=1}^r\{N_i=a_i\}\cap \bigcap_{j=r+1}^t\{N_j = 0\}}
+o(\pi(\subgraph,\C)),
\end{eqnarray*}

We know that $\cmin\ge 1$, where $\cmin$ is defined as in Lemma~\ref{LematSzacowania},
and $mp^{\cmin+1}=O(mp^2)=o(1)$.
Moreover, $A_0=o(\sqrt{m})$ and $A_0p=o(1)$.
By Lemma~\ref{LematSzacowania}, we now have
\begin{eqnarray*}
\pi(\subgraph,\C)
&\sim&
\sum_{
1\le a_i\le A_0\text{ for }1\leq i\leq r
}
\,
\prod_{i=1}^r\Pra{\tilde{N}_i = a_i}
\prod_{j=r+1}^t\Pra{\tilde{N}_j = 0}
\\
&\sim&
\prod_{i=1}^r\Pra{1\le \tilde{N}_i \le A_0}
\prod_{j=r+1}^t\Pra{\tilde{N}_j = 0}\\
&\sim&
\prod_{i=1}^r\Pra{\tilde{N}_i \ge 1}
\prod_{j=r+1}^t\Pra{\tilde{N}_j = 0}\\
&=&
\prod_{i=1}^r(1-\exp(-mp^{|C_i|}))\prod_{j=r+1}^t\exp(-mp^{|C_j|})\\
&\sim&
\prod_{i=1}^r(1-\exp(-mp^{|C_i|}))\\
&\sim&
\prod_{i\in\I_1}\left(1-e^{-mp}\right))
\prod_{i\in\I_2}mp^{|C_i|},
\end{eqnarray*}
proving (\ref{state1}).

To show (\ref{state2}), we observe that
if $mp\geq 1$ then $1-e^{-mp}\asymp 1$
and if $mp<1$ then 
$1-e^{-mp}\asymp mp$. Thus
$1-e^{-mp} \asymp \min\{mp,1\}$.
\end{proof}
\begin{rem}
Similar techniques lead to the following equation which might be useful 
in the study of  subgraph counts above the threshold for subgraph appearance. 
For any clique cover  $\C=\{C_1,C_2,\ldots,C_r\}$ 
of $\subgraph$, if $\Omega(1)= mp^2=O(\log n)$ then
$$
\pi(\subgraph,\C)\sim (1-e^{-mp^2})^{|\I_2|}(e^{-mp^2})^{\binom{h}{2}-|\I_2|}\prod_{i\in\I_3}mp^{|C_i|},
$$
 where
$\I_2=\{1\leq i\leq r:|C_i|=2\}$
 and
$\I_3=\{1\leq i\leq r:|C_i|\geq 3\}$.
\end{rem}

\section{Asymptotic moments}
We now define
\eq\label{minSdef}
\minS(\subgraph,\C)=\min_{\emptyset\varsubsetneq S\varsubsetneq V(\subgraph)}
\psi(\subgraph,\C,S).
\en
Lemma~\ref{moment} is a second moment calculation for copies of $\subgraph$
induced by given clique covers $\C_1$, $\C_2$. A 
calculation resulting in a special case of
Lemma~\ref{moment} was made in \cite{KSS}.
\begin{lem}\label{moment}
Suppose that $mp^2=o(1)$.
Let $G_1$ and $G_2$ be two, not necessarily isomorphic, subgraphs of $K_{\V}$ on
$g_1:=|V(G_1)|$ and $g_2:=|V(G_2)|$ labelled vertices, respectively, which intersect on
$\ell:=|V(G_1)\cap V(G_2)|$ vertices and such that $G_1\cap G_2$
is an induced subgraph of both of $G_1$ and $G_2$.
Let $\C_1$ and
let $\C_2$ be proper clique covers of $G_1$ and $G_2$, respectively.
Define
$X(G_i,\C_i)$  to be the
indicator random variable of the event 
that $G_i$ is induced by a clique cover $\C_i$ in $\Gp$.
Then, for $\Gp$,
\eq\label{interm}
\E(X(G_1,\C_1) X(G_2,\C_2))=
O(1)\E(X(G_1,\C_1)\E X(G_2,\C_2)) 
\frac{n^\ell}{\minS(G_2,\C_2)}.
\en
\end{lem}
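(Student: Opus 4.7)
Proof plan. My strategy is to compute the joint expectation by partitioning the objects in $\W$ according to which subset of $V:=V(G_1)\cup V(G_2)$ chose them, then applying Lemma~\ref{LematSzacowania} to the resulting family of witness types. Set $S:=V(G_1)\cap V(G_2)$ and write $\C_i=\{C^{(i)}_1,\ldots,C^{(i)}_{r_i}\}$. The joint event $\{X(G_1,\C_1)=X(G_2,\C_2)=1\}$ is equivalent to the following condition on object--vertex choices: for every clique $C^{(i)}_j$ there is at least one ``witness'' object $w$ with $\{v\in V(G_i):w\in\W_v\}=C^{(i)}_j$, and every object $w$ with $|\{v\in V(G_i):w\in\W_v\}|\ge 2$ has its $V(G_i)$-choosers equal to some clique of $\C_i$. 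A single object can witness simultaneously a clique $C^{(1)}\in\C_1$ and a clique $C^{(2)}\in\C_2$ only when $C^{(1)}\cap S=C^{(2)}\cap S$; in that case its full set of choosers is $C^{(1)}\cup C^{(2)}$.

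The first step is to apply Lemma~\ref{LematSzacowania} to this family of witness types. Since $\C_1,\C_2$ are proper, every witness type has size at least $2$, so $mp^{\cmin+1}\le mp^3=o(1)$ and the Poisson-approximated counts are asymptotically independent. Summing over multiplicities of witness objects, in the manner of the proof of Lemma~\ref{cliqueasymp}, produces an expression in which each required witness contributes a factor of $mp^{|T|}$. The second step is to reorganise this product: cliques of $\C_1$ not entirely contained in $S$ and cliques of $\C_2$ not entirely contained in $S$ recombine (up to an $O(1)$ factor) into $\pi(G_1,\C_1)$ and $\pi(G_2,\C_2)$ without interaction, while the cliques of either cover contained in $S$ have been double-counted, since both covers need witnesses on the overlap. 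The correction factor equals $\pi(G_2[S],\C_2[S])$, and Lemma~\ref{cliqueasymp} applied to $G_2[S]$ gives $\pi(G_2[S],\C_2[S])\asymp\psi(G_2,\C_2,S)/n^{|S|}$, where the $n^{|S|}$ reflects the fact that $\psi$ counts placements of $G_2[S]$ in $K_\V$ while in our setting $G_2[S]$ sits at fixed vertices. Substituting and bounding $\psi(G_2,\C_2,S)\ge\minS(G_2,\C_2)$ yields~\eqref{interm}.

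The main obstacle is the bookkeeping in the first two steps: precisely enumerating the allowed witness types when cliques of $\C_1$ and $\C_2$ overlap partially on $S$, and verifying that the cancellation against $\pi(G_2[S],\C_2[S])$ incurs only an $O(1)$ multiplicative loss. A secondary complication comes from the case split embedded in the definition of $\psi(G_2,\C_2,S)$ between the primed and unprimed restricted clique covers, which reflects the fact that size-one restrictions of cliques to $S$ contribute $1-e^{-mp}\asymp\min\{mp,1\}$ by Lemma~\ref{cliqueasymp}; since $mp^2=o(1)$ permits both regimes $mp=o(1)$ and $mp=\Omega(1)$, the argument must uniformly match the correct branch of the defining minimum.
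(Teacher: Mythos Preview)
Your plan is essentially the paper's proof. The paper formalises your ``partition of objects by witness type'' as the finite set $\C_1+\C_2$ of clique covers $\C$ on $V(G_1\cup G_2)$ with $\C[V(G_1)]=\C_1$ and $\C[V(G_2)]=\C_2$; each $C\in\C$ is then either some $C_{1,i}$, some $C_{2,j}$, or a merged $C_{1,k}\cup C_{2,l}$. One has $\E(X(G_1,\C_1)X(G_2,\C_2))=\sum_{\C\in\C_1+\C_2}\pi(G_1\cup G_2,\C)$, and Lemma~\ref{cliqueasymp} (rather than Lemma~\ref{LematSzacowania} directly) is applied to each summand. The case split $mp\le1$ versus $mp>1$ is made explicit instead of being hidden inside the minimum defining $\psi$.

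One step in your plan is stated inaccurately and would mislead you if followed literally. You write that cliques ``not entirely contained in $S$'' recombine without interaction and that the correction comes from cliques \emph{contained} in $S$ being double-counted. That dichotomy is wrong: any pair $C_{1,k}\in\C_1$, $C_{2,l}\in\C_2$ with $C_{1,k}\cap S=C_{2,l}\cap S\ne\emptyset$ can share a witness object (whose chooser set within $V(G_1)\cup V(G_2)$ is $C_{1,k}\cup C_{2,l}$), regardless of whether either clique lies inside $S$. In the paper's bookkeeping, each such merged pair contributes a correction factor $(mp^{|C_{1,k}\cap C_{2,l}|})^{-1}$ after factoring out $\pi(G_1,\C_1)\pi(G_2,\C_2)$; the key identity is $C_{1,k}\cap C_{2,l}=C_{2,l}\cap S$, so the product of these corrections is bounded below by $\prod_{C\in\C_2[S]}mp^{|C|}$ (or $\prod_{C\in\C_2'[S]}mp^{|C|}$ when $mp>1$). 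This is what delivers the $n^\ell/\minS(G_2,\C_2)$ bound. Your endpoint --- identifying the correction with $\pi(G_2[S],\C_2[S])\asymp\psi(G_2,\C_2,S)/n^{|S|}$ --- is therefore correct, but the route is via restrictions to $S$ of \emph{all} cliques of $\C_2$, not only those cliques already contained in $S$.
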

\begin{proof}
Suppose that $\C_1=\{C_{1,1},C_{1,2},\ldots,C_{1,r}\}$ and
$\C_2=\{C_{2,1},C_{2,2},\ldots,C_{2,s}\}$.
Let $\C_1+\C_2$ denote
the set of clique covers on $V(G_1\cup G_2)$
such that $\C\in\C_1+\C_2$ implies $\C[V(G_1)]=\C_1$ and
$\C[V(G_2)]=\C_2$.
If $\C\in \C_1+\C_2$ and $C\in \C$, then $C$ must be
one of the following three forms:
\begin{enumerate}
\item[(i)] $C=C_{1,i}$ for some $1\le i\le r$ but $\forall_{1\le j\le s}(C\neq C_{2,j})$; 
\item[(ii)] $C=C_{2,j}$ for some $1\le j\le s$ but $\forall_{1\le i\le r}(C\neq C_{1,j})$;
\item[(iii)]  $C=C_{1,i}\cup C_{2,j}$ for some $1\le i\le r$, $1\le j\le s$ 
(including the case $C=C_{1,i} = C_{2,j}$).
\end{enumerate}

Given $\C\in \C_1+\C_2$, let $\J_1=\J_{1}(\C)\subseteq [r]$, 
$\J_2=\J_{2}(\C)\subseteq [s]$ and $\J_3=\J_{3}(\C)\subseteq [r]\times [s]$ 
be the sets of indices for which  (i), (ii) and (iii),
respectively, are true for some $C\in\C$.
Define
$$
\J_4=\{i\in [r]:\exists j\in [s] {\rm \ such \ that \ } (i,j)\in\J_3\}
$$
and
$$
\J_5=\{j\in [s]:\exists i\in [r] {\rm \ such \ that \ } (i,j)\in\J_3\}
$$
As $\C[V(G_1)]=\C_1$ and
$\C[V(G_2)]=\C_2$, it must be true that
\eq\label{Jcond}
\J_1\cup\J_4=[r] {\rm \ and \ } \J_2\cup\J_5=[s].
\en
Note that the clique covers in $\C_1+\C_2$ are proper because
$\C_1$ and $\C_2$ are proper.
Therefore, if $G_1$ is induced by $\C_1$ on $V(G_1)$
and $G_2$ is induced by $\C_2$ on $V(G_2)$, then
$G_1\cup G_2$ is induced by a unique element of $\C_1+\C_2$ and
\eq\nonumber
\E X(G_1,\C_1)X(G_2,\C_2)=\sum_{\C\in \C_1+ \C_2}\E X(G_\C,\C).
\en
Moreover, for any $\C\in \C_1+\C_2$, Lemma~\ref{cliqueasymp} implies
\begin{eqnarray}\label{Isim}
\E X(G_\C,\C)
&\sim& 
\prod_{\stackrel{i\in \J_1}{|C_{1,i}|>1}}mp^{|C_{1,i}|}
\prod_{\stackrel{i\in \J_1}{|C_{1,i}|=1}}(1-e^{-mp})
\prod_{\stackrel{i\in \J_1}{|C_{2,i}|>1}}mp^{|C_{2,i}|} \prod_{\stackrel{i\in \J_1}{|C_{2,i}|=1}}(1-e^{-mp})\\
&&\times\prod_{\stackrel{(k,l)\in \J_3}{|C_{1,k}\cup C_{2,l}|>1}}mp^{|C_{1,k}\cup C_{2,l}|}
\prod_{\stackrel{(k,l)\in \J_3}{|C_{1,k}\cup C_{2,l}|=1}}
(1-e^{-mp}).\nonumber
\end{eqnarray}
We will analyze (\ref{Isim})
separately for the cases $mp\leq 1$ and $mp>1$.

When $mp\leq 1$, we have $1-e^{-mp}\leq mp$, and so
$$
\E X(G_\C,\C)=
O(1)\prod_{i\in \J_1}mp^{|C_{1,i}|}
\prod_{j\in \J_2}mp^{|C_{2,j}|}
\prod_{(k,l)\in \J_3}mp^{|C_{1,k}\cup C_{2,l}|}.
$$
Now, (\ref{Jcond}) and $mp\leq e(1-e^{-mp})$ give
\begin{eqnarray*}
\E X(G_\C,\C)
&=&
O(1)\prod_{i\in\J_1}mp^{|C_{1,i}|}
\prod_{j\in\J_2}mp^{|C_{2,j}|}
\prod_{(k,l)\in \J_3}\frac{mp^{|C_{1,k}|}mp^{|C_{2,l}|}}{mp^{|C_{1,k}\cap C_{2,l}|}}\\
&=&
O(1)\prod_{i\in [r]}mp^{|C_{1,i}|}
\prod_{j\in [t]}mp^{|C_{2,j}|} 
\prod_{(k,l)\in \J_3}\frac{1}{mp^{|C_{1,k}\cap C_{2,l}|}}.
\end{eqnarray*}

It must be the case that $(C_{1,k}\cup C_{2,l})\cap V(G_1)=C_{1,k}$ and that
$(C_{1,k}\cup C_{2,l})\cap V(G_2)=C_{2,l}$, which implies that
$C_{2,l}\cap (V(G_1)\cap V(G_2)) = C_{1,k}\cap C_{2,l}$.
Therefore, by definition (\ref{minSdef}),
$$
\prod_{(k,l)\in \J_3}mp^{|C_{1,k}\cap C_{2,l}|}\ge
\prod_{C\in \C_2[V(G_1)\cap V(G_2)]}mp^{|C|}
\ge \frac{\minS(G_2,\C_2)}{n^\ell},
$$
proving (\ref{interm}).

In case $mp>1$, by \eqref{Isim} we have
$$
\E X(G_\C,\C)=
O(1)\prod_{\stackrel{i\in \J_1}{|C_{1,i}|>1}}mp^{|C_{1,i}|}
\prod_{\stackrel{i\in \J_1}{|C_{2,i}|>1}}mp^{|C_{2,i}|} 
\prod_{\stackrel{(k,l)\in \J_3}{|C_{1,k}\cup C_{2,l}|>1}}mp^{|C_{1,k}\cup C_{2,l}|}.
$$
We now bound terms of the form $mp^{|C_{1,k}\cup C_{2,l}|}$ in the expression
above. If $|C_{1,k}|=|C_{2,l}|=1$, then 
$mp^{|C_{1,k}\cup C_{2,l}|}=mp^2=o(1)$.
If $|C_{1,k}|=1$ and $|C_{2,l}|>1$, then 
$mp^{|C_{1,k}\cup C_{2,l}|}\leq mp^{|C_{2,l}|}$.
If $|C_{1,k}|>1$ and $|C_{2,l}|=1$, then
$mp^{|C_{1,k}\cup C_{2,l}|}\leq mp^{|C_{1,k}|}$.
If $|C_{1,k}|>1$ and $|C_{2,l}|>1$, then
$
mp^{|C_{1,k}\cup C_{2,l}|}=
mp^{|C_{1,k}|}mp^{|C_{2,l}|}/mp^{|C_{1,k}\cap C_{2,l}|}
$.
Using these bounds results in
\begin{eqnarray*}
\E X(G_\C,\C)&=&
O(1)\prod_{\stackrel{i\in [r]}{|C_{1,i}|>1}}mp^{|C_{1,i}|}
\prod_{\stackrel{i\in [s]}{|C_{2,i}|>1}}mp^{|C_{2,i}|}
\prod_{\stackrel{(k,l)\in \J_3}{|C_{1,k}|>1,|C_{2,l}|>1}}
\frac{1}{mp^{|C_{1,k}\cap C_{2,l}|}}\\
&=&O(1)
\E(X(G_1,\C_1)\E X(G_2,\C_2))
\prod_{\stackrel{(k,l)\in \J_3}{|C_{1,k}|>1,\,|C_{2,l}|>1}}
\frac{1}{mp^{|C_{1,k}\cap C_{2,l}|}}.
\end{eqnarray*}
Therefore, as in the argument for $mp\leq 1$, we have
$$
\prod_{\stackrel{(k,l)\in \J_3}{|C_{1,k}|>1,\,|C_{2,l}|>1}}mp^{|C_{1,k}\cap C_{2,l}|}\ge \prod_{C\in \C_2^\prime[V(G_1)\cap V(G_2)]}mp^{|C|}
\ge \frac{\minS(G_2,\C_2)}{n^\ell},
$$
resulting in (\ref{interm}) for this case.
\end{proof}

\section{Subgraph counts}
In this section we prove Theorem~\ref{main}.
For copies of $\subgraph$ induced by 
clique covers in $\covers_0(\subgraph)$, the proof is
an application of Stein's method, using the estimates
we have already obtained. We must also show that the number
of copies of $\subgraph$ induced by clique covers in 
$\covers(\subgraph)\setminus\covers_0(\subgraph)$
converges to 0 in probability.
\begin{proof}
For each $i\in [N_n]$,
each $\C\in\covers(\subgraph)$ has a corresponding
clique cover in $\covers(H_i)$ induced by the isomorphism
between $\subgraph$ and $H_i$. We write
$X(H_i,\C)$  for the indicator random variable that 
the clique cover in $\covers(H_i)$ corresponding to 
$\C\in\covers(\subgraph)$ induces $H_i$.
Suppose that $\subgraph$ is strictly $\alpha$-balanced
and that $p=cn^{-\eta_0}$ for $c>0$.
Write $X=\sum_{i\in [N_n]}X_i$ as $X=Y_0+Y_1$, where
$$
Y_0=\sum_{i\in[N_n]}\sum_{\C\in \covers_0(\subgraph)} X(H_i,\C),
$$
and
$$
Y_1=\sum_{i\in[N_n]}
\sum_{\C\in \covers(\subgraph)\setminus\covers_0(\subgraph)} 
X(H_i,\C).
$$

For each $\C\in\covers_0(\subgraph)$,
$\eta_0(\subgraph)=\eta_1(\subgraph,\C)=\eta_2(\subgraph,\C,V(\subgraph))$
and by Lemma~\ref{cliqueasymp},
\begin{eqnarray}\nonumber
\BE(Y_0)&\sim&\frac{n^h}{|{\rm aut}(\subgraph)|}
\sum_{\C\in\covers_0(\subgraph)}\pi(\subgraph,\C)\\
\nonumber
&\sim&\lambda_0:=\frac{1}{|{\rm aut}(\subgraph)|} 
\sum_{\C\in\covers_0(\subgraph)}c^{\sum\C}.
\end{eqnarray} 
For any $\C\in\covers_0$ and 
$\emptyset\varsubsetneq S\varsubsetneq V(\subgraph)$,
$\psi(\subgraph,\C,S)\to\infty$ and so $\Phi\to\infty$,
where $\Phi$ is defined by 
\begin{equation}\label{EqPhiEta}
\Phi:=\Phi(\subgraph)=\min_{\C\in\covers_0}(\omega(\subgraph,\C)).
\end{equation}

A {\em dependency graph} $L$ is a graph 
with vertex set $[N_n]\times \covers_0$ having the property that 
whenever $A\subseteq [N_n]\times \covers_0$ and 
$B\subseteq [N_n]\times \covers_0$ satisfy the property that
there are no edges between $A$ and $B$ in $L$,
it follows that $\{X(H_i,\C_1):(i,\C_1)\in A\}$ and $\{X(H_j,\C_2):(j,\C_2)\in B\}$
are mutually independent sets of random variables. We define a dependency graph $L$ with a vertex set $[N_n]\times \covers_0$
and such that for $(i,\C_1),(j,\C_2)\in [N_n]\times \covers_0$, 
$\{(i,\C_1),(j,\C_2)\}\in E(L)$ if and only if
$V(H_i)\cap V(H_j)\neq\emptyset$.
Since subgraphs with disjoint vertex sets appear independently in $\Gp$, this is well defined dependency graph.
We have
$$
\E(X(H_i,\C))=\pi(H_i,\C).
$$
From Theorem 6.23 of \cite{JLR}, we have
\begin{align}\nonumber
d_{TV}(Y_0,P_{\BE(Y_0)})
&\le
\min({\lambda_0}^{-1},1)
\Bigg(\sum_{(i,\C)\in V(L)}\pi(H_i,\C)^2
\\
&+\nonumber
\sum_{(i,\C_1)(j,\C_2)\in E(L)}
\left(\pi(H_i,\C_1)\pi(H_j,\C_2)+\E(X(H_i,\C_1)X(H_j,\C_2))\right)\Bigg),
\end{align}
where the sum $\sum_{(i,\C_1)(j,\C_2)\in E(L)}$ means summing over ordered pairs
$((i,\C_1)(j,\C_2))$ such that $\{(i,\C_1)(j,\C_2)\}\in E(L)$.
Observe that
$$
\sum_{(i,\C)\in V(L)}\pi(H_i,\C)^2
=O\left(\sum_{\C\in\covers_0} n^{h}m^{2|\C|}p^{2\sum\C}\right)=O(n^{-h}).
$$
and that
$$
\sum_{(i,\C_1)(j,\C_2)\in E(L)}
\pi(H_i,\C_1)\pi(H_j,\C_2)=
O\left(n^{2h-1}m^{2|\C|}p^{2\sum\C}\right)=O(n^{-1})
$$
Note that 
$$
\E(X(H_i,\C_1)X(H_i,\C_2))=0
$$
when $\C_1,\C_2\in\covers_0(H_i)$ and $\C_1\neq\C_2$. 
By Lemma~\ref{moment} and \eqref{EqPhiEta}
\begin{align*}
\sum_{(i,\C_1)(j,\C_2)\in E(L)}
&\E(X(H_i,\C_1)X(H_j,\C_2))\\
&=
\sum_{\stackrel
{i,j\in [N_n]}
{V(H_i)\cap V(H_j)\neq \emptyset}
}
\sum_{C_1\in\covers_0(H_i)}
\sum_{C_2\in\covers_0(H_j)}
\E(X(H_i,\C_1)X(H_j,\C_2))\\
&=O(1)
\sum_{C_1,C_2\in\covers_0(\subgraph)}
n^h\sum_{l=1}^{h-1}n^{h-l}
\frac{\pi(\subgraph,\C_1)\pi(\subgraph,\C_2) n^{l}}{\Phi}\\
&=O(\Phi^{-1})=o(1).
\end{align*}
This proves that $d_{TV}(Y_0,P_{\BE(Y_0)})=o(1)$.
Since, as is well known, 
$$d_{TV}(P_{\BE(Y_0)},P_{\lambda_0})=
O\left(|\BE(Y_0)-\lambda_0|\right)=o(1),$$ we have
\eq\label{bound1}
d_{TV}(Y_0,P_{\lambda_0})\leq
d_{TV}(Y_0,P_{\BE(Y_0)})+d_{TV}(P_{\BE(Y_0)},P_{\lambda_0})=o(1).
\en

If $\C\in\covers(\subgraph)\setminus\covers_0(\subgraph)$, then
$\eta_1(\subgraph,\C)<\eta_0(\subgraph)$ and it must be the case
that 
there exists $\emptyset\varsubsetneq S\subseteq
V(\subgraph)$ such that 
$\eta_2(\subgraph,\C,S)<\eta_0(\subgraph)$. 
For this $S\subset V(\subgraph)$,
let $H_i[S]$ be the subgraph of $H_i$ induced by those vertices
of $V(H_i)$ which are the image of $S$ under the isomorphism between $\subgraph$
and $H_i$.
For this $S$ and $p=cn^{-\eta_0}$, by definitions of $\eta_0$, $\eta_1$, and $\eta_2$ we have

$$
\Pra{\sum_{i\in [N_n]}X(H_i,\C)>0}
\leq \Pra{
\exists_{i\in [N_n]} X(H_i[S],\C[S])>0}
\le \psi(\subgraph,\C,S)
=o(1).
$$
Therefore, since there is a finite number of clique covers we have $\Pra{Y_1>0}=o(1)$.
We conclude that 
\begin{eqnarray*}
d_{TV}(X,P_{\lambda_0})&\leq&
d_{TV}(X,Y_0)
+d_{TV}(Y_0,P_{\lambda_0})\\
&\leq&
\Pra{Y_1>0}+d_{TV}(Y_0,P_{\lambda_0})\\
&=&
o(1).
\end{eqnarray*}
\end{proof}

We make some further remarks pertaining to the possibility that
$\BE(X)\not\to\lambda_0$.
We call a clique cover $\C\in\covers_0$
$\alpha$-{\em unbalanced} if
$\eta_2(\subgraph,\C,S)<\eta_1(\subgraph,\C,V(\subgraph))$
for some $\emptyset\varsubsetneq S\varsubsetneq V(\subgraph)$.
If all $\C\in\covers_0$ are strictly $\alpha$-balanced, then 
Theorem~\ref{main} holds and $\lim_{n\to\infty}\BE(X)=\lambda_0$. 
If all of the $\C\in\covers_0$ are either strictly $\alpha$-balanced
or $\alpha$-unbalanced, and at least one of them is strictly $\alpha$-balanced,
then Theorem~\ref{main} holds with $\lim_{n\to\infty}\BE(X)>\lambda_0$. 
If all $\C\in\covers_0$ are $\alpha$-unbalanced, 
then $X\to 0$ in probability when $p=cn^{-\eta_0}$.



\section*{Appendix}

\begin{proof}[Proof of Corollary~\ref{CorTrianglefree}]
Let $\C$ be the sole clique cover of the strictly balanced subgraph
$\subgraph$ consisting of cliques of size two. Then for any $S\subseteq V(\subgraph)$ we have $\sum\C'[S]=2|\C'[S]|=2|E(S)|$ and  $\frac{|S|}{|E(S)|}>\frac{h}{e}$ if $E(S)$ is non-empty.
Recall that $\alpha>\frac{h}{e}$.
\\
Therefore, for any $\emptyset\subsetneq S\subsetneq V(\subgraph)$ such that $E(S)$ is non-empty
\begin{align*}
\frac{|S|+\alpha|\C'[S]|}{\sum \C'[S]}=\frac{|S|}{2|E(S)|}+\frac{\alpha}{2}
>
\frac{h}{2e}+\frac{\alpha}{2}
=\frac{|V(\subgraph)|+\alpha|\C'[V(\subgraph)]|}{\sum \C'[V(\subgraph)]}.
\end{align*}
Now, notice that $|E(S)|=\sum\C[S]-|\C[S]|$. 
Therefore, for any $\emptyset\subsetneq S\subsetneq V(\subgraph)$,
such that $E(S)$ is non-empty and
$\alpha\le\frac{|S|}{\sum \C[S]-|\C[S]|}$, we have $|S|-\alpha |E(S)|\ge 0$. Thus
\begin{align*}
\frac{|S|+\alpha|\C[S]|}{\sum \C[S]}
& =
\frac
{|S|+\alpha(\sum\C[S]- |E(S)|)}
{\sum\C[S]} 
=
\frac{|S|-\alpha|E[S]|}{\sum\C[S]}+\alpha\geq\alpha=\\
&=
\frac{\alpha}{2}+\frac{\alpha}{2}
>
\frac{h}{2e}+\frac{\alpha}{2}
= \frac{|V(\subgraph)|+\alpha|\C'[V(\subgraph)]|}{\sum \C'[V(\subgraph)]}. 
\end{align*}
Finally, in the case $\sum \C[S]-|\C[S]|=0$, i.e.~$E(S)=\emptyset$, we have 
$$
\frac{|S|+\alpha|\C[S]|}{\sum \C[S]}>\alpha>
\frac{h}{2e}+\frac{\alpha}{2}=\frac{|V(\subgraph)|+\alpha|\C'[V(\subgraph)]|}{\sum \C'[V(\subgraph)]}.
$$
Therefore $\subgraph$ is strictly $\alpha$-balanced and Theorem~\ref{main} applies.
\end{proof}

\end{document}